\newcommand{\mathsym}[1]{{}}
\newcommand{\thmref}[1]{Theorem~\ref{#1}}
\newcommand{\lemref}[1]{Lemma~\ref{#1}}
\newcommand{\eqnref}[1]{Equation~(\ref{#1})}
\newcommand{\figref}[1]{Figure~\ref{#1}}
\newtheorem{theorem}{Theorem}[section]
\newtheorem{lemma}[theorem]{Lemma}
\theoremstyle{definition}
\newcommand{\vv}[1]{V(#1)}
\def\<{\langle }
\def\>{\rangle }
\newcommand{\secref}[1]{\S\ref{#1}}
\def\mod{\text{mod}}
\newcommand{\jm}{\mathrm{J}}
\newcommand{\lm}{\mathrm{L}}
\newcommand{\plm}{\mathrm{L^+}}
\begin{document}

\title[Effective Resistances of Prism Graphs]{Effective Resistances and Kirchhoff index of Prism Graphs}


\author{Zubeyir Cinkir}
\address{Zubeyir Cinkir\\
Department of Industrial Engineering\\
Abdullah G\"{u}l University\\
Kayseri\\
TURKEY.}
\email{zubeyirc@gmail.com}



\keywords{Prism graph, effective resistance, Kirchhoff index, circuit reduction}

\begin{abstract}
We explicitly compute the effective resistances between any two vertices of a prism graph by using circuit reductions and our earlier findings on a ladder graph. As an application, we derived a closed form formula for the Kirchhoff index of a prism graph.
We show as a byproduct that an explicit sum formula involving trigonometric functions hold by comparing our formula for the Kirchhoff index and previously known results in the literature. We also expressed our formulas in terms of certain generalized Fibonacci numbers.
\end{abstract}

\maketitle

\section{Introduction}\label{sec introduction}

\begin{floatingfigure}[r]{2.4 in}
\begin{center}
\includegraphics[scale=0.4]{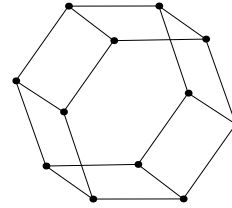}
\end{center}
\caption{Prism graph $Y_6$.} \label{fig prismgraph6}
\end{floatingfigure}
A prism graph $Y_n$ is a planar graph that looks like a circular ladder with $n$ rungs.  \figref{fig prismgraph6} illustrates $Y_6$.
$Y_n$ has $2n$ vertices and $3n$ edges. Each of its edges has length $1$, so the total length of $Y_n$ is $\ell(Y_n):=3n$.

We give explicit formulas for the effective resistances between any two vertices of $Y_n$.
We consider $Y_n$ as an electrical network in which we set the resistances along edges as the corresponding edge lengths.
If we connect two ladder graphs possibly of different vertex numbers by adding four edges to their end vertices, we obtain a prism graph.
We apply circuit reductions to each of those ladder graphs by keeping their four end vertices.  This gives us a circuit reduction of the prism graph $Y_n$. The reduced $Y_n$ will have $8$ vertices. Thanks to knowing the resistance values on a ladder graph \cite{C1}, we can determine the resistance values between the vertices of the reduced $Y_n$ by utilizing the discrete Laplacian and its pseudo inverse of this reduced graph.

Let us define the sequence $G_n$ by the following recurrence relation
$$G_{n+2}=4G_{n+1}-G_{n}, \quad \text{if $n \geq 2$, and $G_0=0$, $G_1=1$}.$$

We showed that the following equalities hold for Kirchhoff index of $Y_n$ (see \thmref{thm Kirchhoff index} and \eqnref{eqn KI and Trig II} below), where $n$ is a positive integer:
\begin{equation*}
\begin{split}
Kf(Y_n)&=\frac{n(n^2-1)}{6}+ \frac{2n^2 G_{n}^2}{ G_{2n}-2G_n}
=\frac{n(n^2-1)}{6}+\frac{n^2}{\sqrt{3}} \Big[ \frac{2}{1-(2-\sqrt{3})^{n}} -1\Big].
\end{split}
\end{equation*}
Similarly, for any positive integer $n$, we showed that the following identities of trigonometric sum hold (see \thmref{thm trig sum} and \eqnref{eqn KI and Trig II} below):
\begin{equation*}
\begin{split}
\sum_{k=0}^{n-1} \frac{1}{1+2 \sin^2({\frac{k \pi}{n}})}&=\frac{2n G_{n}^2}{ G_{2n}-2G_n}
=\frac{n}{\sqrt{3}}\Big[ \frac{2}{1-(2-\sqrt{3})^{n}}-1 \Big].
\end{split}
\end{equation*}
The resistance values on Wheel and Fan graphs (in \cite{BG}) and Ladder graphs (in \cite{C1} and \cite{CEM}) are expressed in terms of generalized Fibonacci numbers.
Our findings for resistance values on a Prism graph are analogues of those results.

\section{Resistances between any pairs of vertices in $Y_n$}\label{sec resistances}
A ladder graph $L_n$ is a planar graph that looks like a ladder with $n$ rungs.
It has $2n$ vertices and $3n-2$ edges. Each of its edges has length $1$, so the total length of $L_n$ is $\ell(L_n):=3n-2$.
We obtain the prism graph $Y_n$ from $L_n$ by adding two edges connecting the end vertices on the same side.

If we delete an edge that is a rung in $Y_n$, and then contract two edges that are on each side of that rung, we obtain the prism graph $Y_{n-1}$. If we apply this process to 3-prism graph $Y_3$, we obtain the graph on the right in \figref{fig Y1andY2}. We call it 2-prism graph $Y_2$. Similarly, if we apply this process to $Y_2$, we obtain the graph on the left in \figref{fig Y1andY2}. We call it 1-prism graph $Y_1$. These graphs are the natural extension of prism graphs to the cases $n=1, \, 2$. We see that our formulas for resistance values, Kirchhoff index as well as the spanning tree formulas are also valid for these two cases (see \thmref{thm Kirchhoff index} and Equations (\ref{eqn prism2b}), (\ref{eqn spantree}) and (\ref{eqn KI and Trig II}) below).
\begin{figure}
\centering
\includegraphics[scale=0.7]{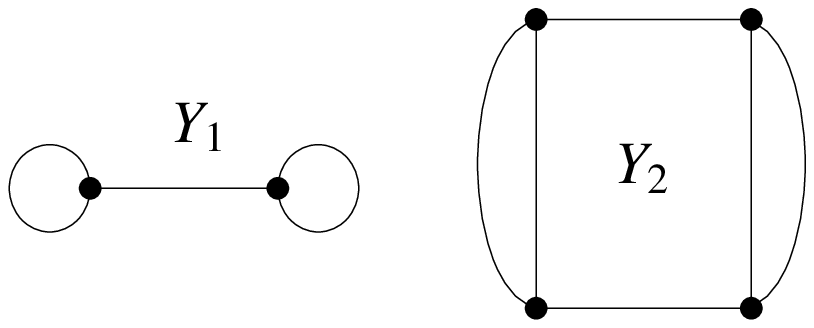} \caption{} \label{fig Y1andY2}
\end{figure}

For a prism graph $Y_n$, we label the vertices on the right and left as $\{ q_1, \, q_2, \, \cdots, q_n \}$ and $\{ p_1, \, p_2, \, \cdots, p_n \}$, respectively. This is illustrated in \figref{fig prismn}, where $2 \leq i \leq n $. We want to find the value of $r(p,q)$ for any two vertices $p$ and $q$ of $Y_n$, where $r(x,y)$ is the resistance function on $Y_n$. We implement the following strategy to do this:
\begin{itemize}
  \item Consider $Y_n$ as the union of the ladder graphs $L_{n-i+1}$ and $L_{i-1}$ as illustrated in \figref{fig 2ladders}.
  \item Apply circuit reductions on each of these ladder graphs by keeping the four end vertices as illustrated in \figref{fig ladderreduction}.
  \item Use our earlier results on a ladder graph in \cite{C1} to find the resistances between the end points of the reduced ladder graphs. Note that certain resistance values are equal to each other because of the symmetries in the ladder graphs.
  \item So far we obtain the circuit reduction of $Y_n$ by keeping its $8$ vertices $p_n$, $q_n$, $p_1$, $q_1$, $p_{i}$, $q_i$, $p_{i-1}$ and $q_{i-1}$. This is illustrated in \figref{fig 2lreduction}. Find the Moore-Penrose inverse $L^+$ of the discrete Laplacian matrix $L$ of this reduced $Y_n$. They are $8 \times 8$ matrices.
  \item Use $L^+$ and \lemref{lem disc} to find out the resistances between the $8$ vertices of $Y_n$. Again note that there are symmetries in $Y_n$ and that $i$ is an arbitrary value in $\{ 2, 3, \dots, n \}$.
\end{itemize}

\begin{figure}
\centering
\includegraphics[scale=0.7]{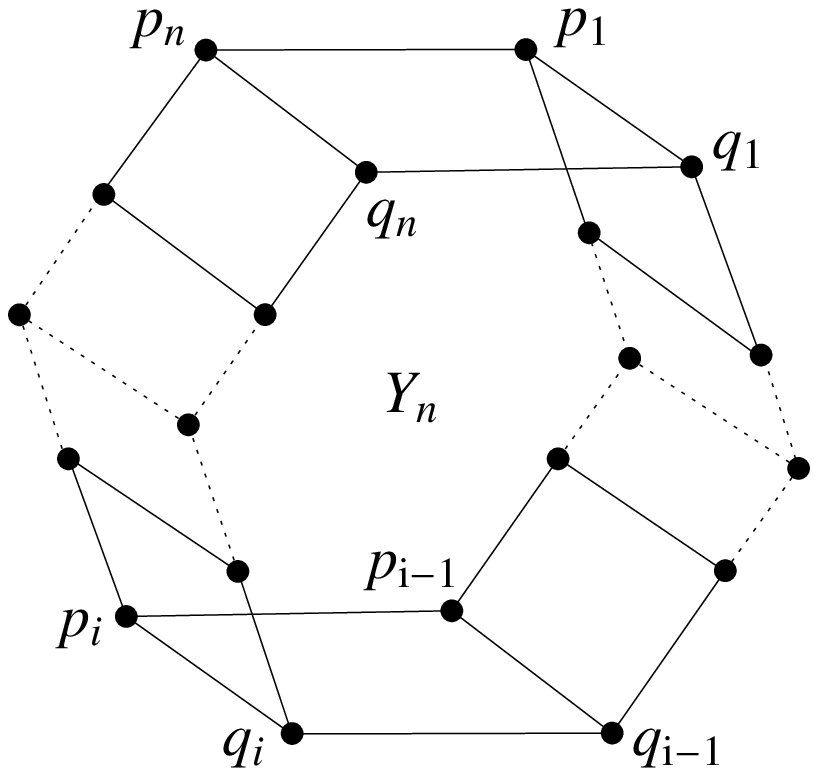} \caption{} \label{fig prismn}
\end{figure}

\begin{figure}
\centering
\includegraphics[scale=0.7]{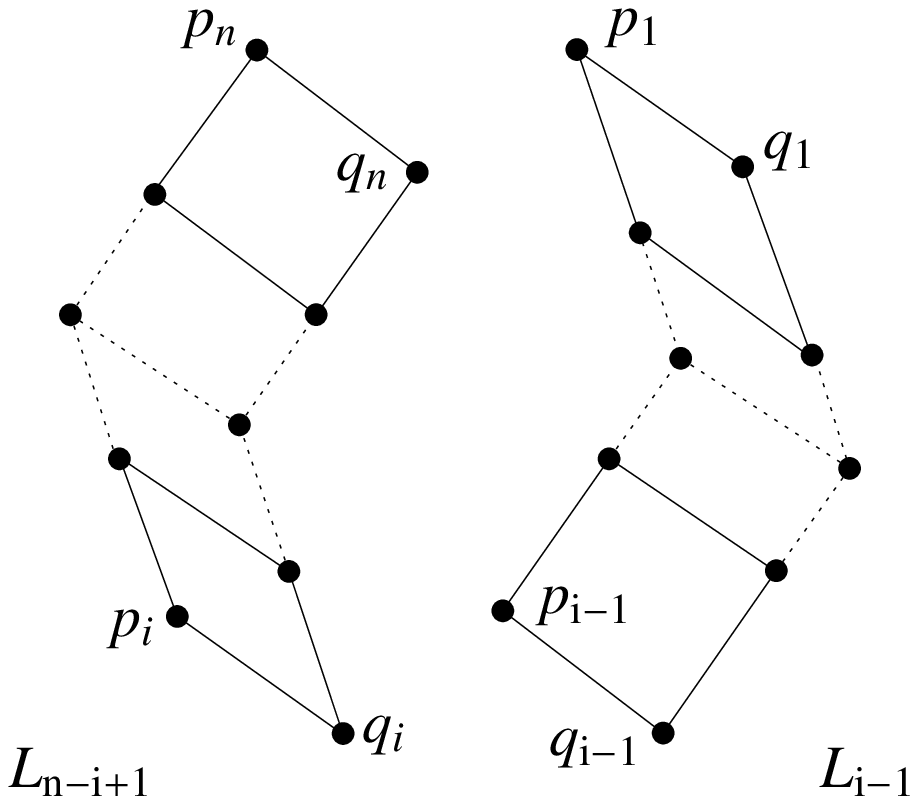} \caption{} \label{fig 2ladders}
\end{figure}

Symmetries in $Y_n$ gives the following identities of resistances:
\begin{equation}\label{eqn prism0}
\begin{split}
r(p_1,p_{i})=r(q_1,q_{i}) \quad \text{and  }  r(p_1,q_{i})=r(q_1,p_{i}), \quad  \text{for each } i \in \{  1, \dots, n \}.
\end{split}
\end{equation}

We recall that the resistance values can be expressed in terms of the entries of the pseudo inverse of the discrete Laplacian matrix.
\begin{lemma} \cite{RB2}, \cite[Theorem A]{D-M} \label{lem disc}
Suppose $G$ is a graph with the discrete Laplacian $\lm$ and the
resistance function $r(x,y)$. For the pseudo inverse $\plm$ of $\lm$, we have
$$r(p,q)=l_{pp}^+-2l_{pq}^+ + l_{qq}^+, \quad \text{for any two vertices $p$ and $q$ of $G$}.$$
\end{lemma}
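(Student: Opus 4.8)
The statement to prove is Lemma~\ref{lem disc}, the classical identity expressing effective resistance in terms of the Moore--Penrose pseudoinverse of the graph Laplacian: $r(p,q)=l_{pp}^+-2l_{pq}^++l_{qq}^+$. Since this is a standard result attributed to the cited references, the plan is to give a self-contained linear-algebra proof based on the variational (energy-minimization) characterization of effective resistance together with the spectral properties of $\lm$.

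The plan is as follows. First I would set up the basic facts about the discrete Laplacian $\lm$ of a connected graph $G$ on vertex set $V$: it is symmetric, positive semidefinite, its kernel is spanned by the all-ones vector $\mathbf{1}$, and its image is the orthogonal complement $\mathbf{1}^{\perp}=\{x : \sum_v x_v = 0\}$. The pseudoinverse $\plm$ is then the unique symmetric matrix that acts as the inverse of $\lm$ on $\mathbf{1}^{\perp}$ and is zero on $\mathbf{1}$; in particular $\lm \plm = \plm \lm = I - \frac{1}{|V|}\mathbf{1}\mathbf{1}^{T}$, the orthogonal projection onto $\mathbf{1}^{\perp}$. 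Next, recall the circuit-theoretic definition of $r(p,q)$: inject unit current at $p$ and extract it at $q$, so the current source vector is $b = e_p - e_q$, which lies in $\mathbf{1}^{\perp}$; the induced potential $\phi$ solves $\lm \phi = b$, and by definition $r(p,q) = \phi_p - \phi_q = b^{T}\phi$.

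Then the computation is short: since $b \in \mathbf{1}^{\perp}$, one solution is $\phi = \plm b$ (any other differs by a multiple of $\mathbf{1}$, which does not affect $b^{T}\phi$ because $b^{T}\mathbf{1}=0$). Hence
\begin{equation*}
r(p,q) = b^{T}\plm b = (e_p - e_q)^{T}\plm(e_p - e_q) = l_{pp}^{+} - l_{pq}^{+} - l_{qp}^{+} + l_{qq}^{+} = l_{pp}^{+} - 2 l_{pq}^{+} + l_{qq}^{+},
\end{equation*}
using the symmetry $\plm = (\plm)^{T}$ in the last step. If one prefers to avoid invoking the current-flow definition as a black box, I would instead start from Thomson's principle / the Dirichlet energy formulation, $r(p,q)^{-1} = \min\{ x^{T}\lm x : x_p - x_q = 1\}$ or equivalently $r(p,q) = \max\{ (x_p-x_q)^2 / x^{T}\lm x\}$, and verify that $x = \plm(e_p-e_q)$ is the minimizer, which reduces to the same algebra.

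The only genuinely delicate point is justifying the passage between the two potential functions that differ by an element of the kernel, and making sure the argument only uses that $G$ is connected (so $\ker \lm$ is exactly one-dimensional); everything else is routine linear algebra. There is no real obstacle here, so I would keep the proof to a few lines, citing \cite{RB2} and \cite{D-M} for the original statements and noting that the identity is independent of the choice of orientation used to define $\lm$ since $\lm$ itself is.
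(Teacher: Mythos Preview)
Your argument is correct and is the standard proof of this identity. However, there is nothing to compare against: the paper does not prove Lemma~\ref{lem disc} at all. It simply states the result with attributions to \cite{RB2} and \cite[Theorem A]{D-M} and then uses it as a tool. So your self-contained derivation via $b=e_p-e_q\in\mathbf{1}^{\perp}$, $\phi=\plm b$, and $r(p,q)=b^{T}\plm b$ goes beyond what the paper provides; it supplies exactly the justification the paper outsources to the references. The only minor comment is that your mention of the variational/Thomson-principle alternative is unnecessary here, since the direct computation already settles the matter in two lines.
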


\begin{figure}
\centering
\includegraphics[scale=0.7]{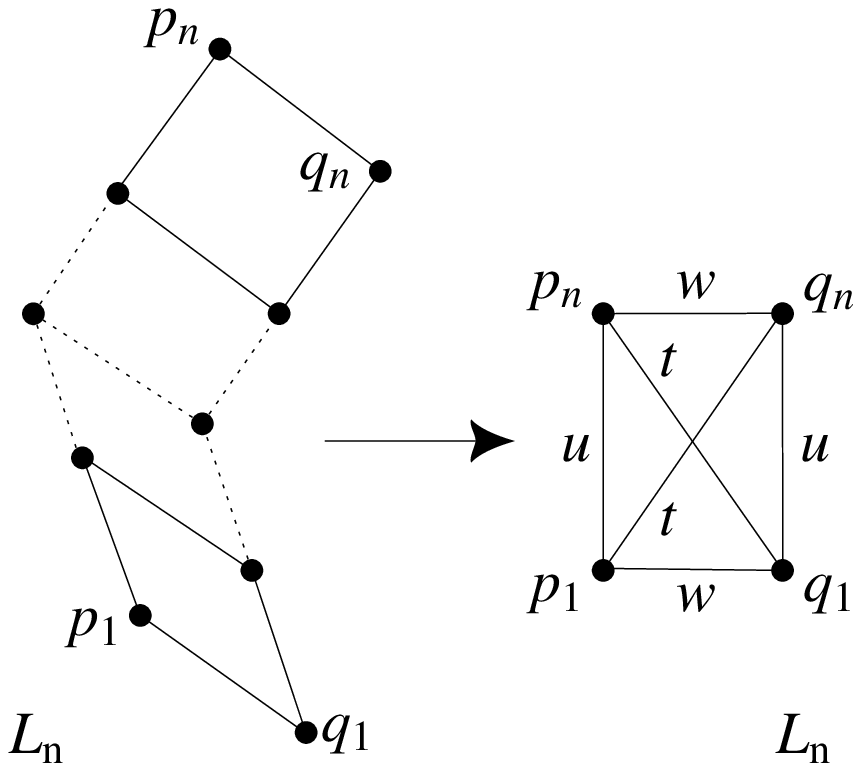} \caption{} \label{fig ladderreduction}
\end{figure}
\begin{figure}
\centering
\includegraphics[scale=0.7]{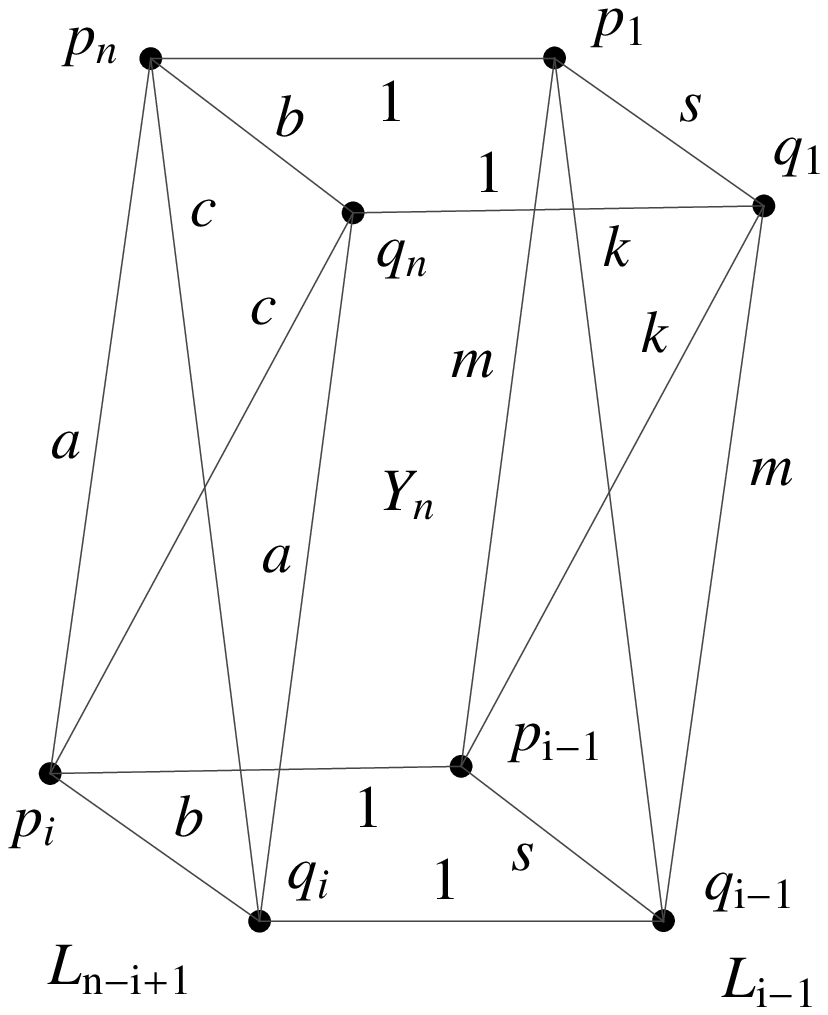} \caption{} \label{fig 2lreduction}
\end{figure}

Let $K=1+\frac{1}{k}+\frac{1}{m}+\frac{1}{s}$ and $S=1+\frac{1}{a}+\frac{1}{b}+\frac{1}{c}$, where $k$, $m$, $s$, $a$, $b$ and $c$ are the resistance values along the edges given in \figref{fig 2lreduction}. Considering the ordering of the vertices
$V=\{ p_1, \, p_{i-1}, \, p_i, \, p_n, \, q_1, \, q_{i-1}, \, q_i, \, q_n \}$, discrete Laplacian matrix $L$ of the graph (reduced $Y_n$) given in \figref{fig 2lreduction} is as follows:
$$
L=\left[
\begin{array}{cccccccc}
 K & -\frac{1}{m} & 0 & -1 & -\frac{1}{s} & -\frac{1}{k} & 0 & 0 \\[.15cm]
 -\frac{1}{m} & K & -1 & 0 & -\frac{1}{k} & -\frac{1}{s} & 0 & 0 \\[.15cm]
 0 & -1 & S & -\frac{1}{a} & 0 & 0 & -\frac{1}{b} & -\frac{1}{c} \\[.15cm]
 -1 & 0 & -\frac{1}{a} & S & 0 & 0 & -\frac{1}{c} & -\frac{1}{b} \\[.15cm]
 -\frac{1}{s} & -\frac{1}{k} & 0 & 0 & K & -\frac{1}{m} & 0 & -1 \\[.15cm]
 -\frac{1}{k} & -\frac{1}{s} & 0 & 0 & -\frac{1}{m} & K & -1 & 0 \\[.15cm]
 0 & 0 & -\frac{1}{b} & -\frac{1}{c} & 0 & -1 & S & -\frac{1}{a} \\[.15cm]
 0 & 0 & -\frac{1}{c} & -\frac{1}{b} & -1 & 0 & -\frac{1}{a} & S \\
\end{array}
\right].
$$
Then we can compute the Moore-Penrose inverse $L^+$ of $L$ by using \cite{MMA} with
the following formula (see \cite[ch 10]{C-S}):
\begin{equation}\label{eqn disc5}
 \plm = \big( \lm - \frac{1}{8}\jm \big)^{-1} + \frac{1}{8} \jm.
\end{equation}
where $\jm$ is of size $8 \times 8$ and has all entries $1$. Next, we use $L^+$ and \lemref{lem disc} to obtain
$r(p_1,p_{i-1})$ and $r(p_1,q_{i-1})$. In this way, we find that

\tiny
$r(p_1,p_{i-1})=\frac{P}{Q}$, where
$P=m (4 b c (m s + k (m + 2 s + m s)) +
   a^2 ((2 + b) (2 + c) m s +
      k (2 (2 + b) (2 + c) s +
         m (4 + 2 c + 4 s + c s + b (2 + c + s)))) +
   2 a (2 c (m s + k (m + 2 s + m s)) +
      b (2 (1 + c) m s + k (4 (1 + c) s + m (2 + 2 c + 2 s + c s)))))$
and
$Q=2 (c (2 m + k (2 + m)) +
   a ((2 + c) m + k (2 + c + m))) (b (2 s + m (2 + s)) +
   a ((2 + b) s + m (2 + b + s)))
$.

$r(p_1,q_{i-1})=\frac{R}{S}$, where
$R=k (c (4 c (2 m s + k (m + s + m s)) +
      b (4 (2 + c) m s + k (2 (2 + c) s + m (4 + 2 c + 4 s + c s)))) +
    a (b (2 + c) (2 (2 + c) m s + k ((2 + c) s + m (2 + c + 2 s))) +
      c (4 (2 + c) m s + k (2 (2 + c) s + m (4 + 2 c + 4 s + c s)))))$
and
$S=2 (c (2 m + k (2 + m)) +
   a ((2 + c) m + k (2 + c + m))) (c (2 s + k (2 + s)) +
   b ((2 + c) s + k (2 + c + s)))
$.
\normalsize

We note that these resistance values can be expressed in a more compact form as below:
\begin{equation}\label{eqn prism1}
\begin{split}
&r(p_1,p_{i-1})=\frac{1}{2}\left(\frac{1}{\frac{1}{2+\frac{a c}{a+c}}+\frac{1}{\frac{m k}{m+k}}}+\frac{1}{\frac{1}{2+\frac{a b}{a+b}}+\frac{1}{\frac{m s}{m+s}}}\right),\\
&r(p_1,q_{i-1})= \frac{1}{2}\left(\frac{1}{\frac{1}{2+\frac{a c}{a+c}}+\frac{1}{\frac{m k}{m+k}}}+\frac{1}{\frac{1}{2+\frac{ b c}{b+c}}+\frac{1}{\frac{k s}{k+s}}}\right).
\end{split}
\end{equation}

To find the exact values of the resistances in \eqnref{eqn prism1}, we need the corresponding values from the circuit reduction of $L_{n-i+1}$ and $L_{i-1}$. Thus, we turn our attention to the circuit reduction of $L_n$ as in \figref{fig ladderreduction}.
Let us apply circuit reductions to a ladder graph $L_n$ by keeping its vertices at its bottom and top so that we obtain a complete graph on $4$ vertices. Suppose $p_1$, $q_1$ and $p_n$, $q_n$ are the vertices at the bottom and top of the ladder graph.  This is illustrated in \figref{fig ladderreduction}. Note that by the symmetry in $L_n$, we have only three distinct edge lengths in the complete graph obtained. Let us consider the ordering of the vertices $\{p_n, \, q_n, \, p_1, \, q_1 \}$. Using the notations
in \figref{fig ladderreduction}, the Laplacian matrix $M$ of the reduced graph can be given as follows:
$$
M=\left[
\begin{array}{cccc}
 \frac{1}{t}+\frac{1}{u}+\frac{1}{w} & -\frac{1}{w} & -\frac{1}{u} & -\frac{1}{t} \\[.15cm]
 -\frac{1}{w} & \frac{1}{t}+\frac{1}{u}+\frac{1}{w} & -\frac{1}{t} & -\frac{1}{u} \\[.15cm]
 -\frac{1}{u} & -\frac{1}{t} & \frac{1}{t}+\frac{1}{u}+\frac{1}{w} & -\frac{1}{w} \\[.15cm]
 -\frac{1}{t} & -\frac{1}{u} & -\frac{1}{w} & \frac{1}{t}+\frac{1}{u}+\frac{1}{w} \\
\end{array}
\right].
$$
Then we use symmetries in $L_n$, compute the Moore-Penrose inverse $M^+$ of $M$ and apply \lemref{lem disc} to write
\begin{equation}\label{eqn ladder1}
\begin{split}
&r_{L_n}(p_n,q_n)=r_{L_n}(p_1,q_1)= \frac{w (w t+u (w+2 t))}{2 (u+w) (w+t)} = \frac{1}{2} \left(\frac{w t}{w+t}+\frac{u w}{u+w}\right),\\
&r_{L_n}(p_n,p_1)=r_{L_n}(q_n,q_1)= \frac{u (2 w t+u (w+t))}{2 (u+w) (u+t)}= \frac{1}{2} \left(\frac{u w }{u+w}+\frac{u t}{u+t}\right),\\
&r_{L_n}(p_n,q_1)=r_{L_n}(q_n,p_1)= \frac{t (2 u w+(u+w) t)}{2 (u+t) (w+t)} = \frac{1}{2} \left(\frac{u t }{u+t}+\frac{w t}{w+t}\right).
\end{split}
\end{equation}
Using \eqnref{eqn ladder1}, we derive
\begin{equation}\label{eqn ladder2}
\begin{split}
&A_n:=\frac{w t}{w+t}=r_{L_n}(p_n,q_n)+r_{L_n}(p_n,q_1)-r_{L_n}(p_n,p_1), \\
&B_n:=\frac{u w}{u+w}=r_{L_n}(p_n,q_n)-r_{L_n}(p_n,q_1)+r_{L_n}(p_n,p_1), \\
&C_n:=\frac{u t}{u+t}=-r_{L_n}(p_n,q_n)+r_{L_n}(p_n,q_1)+r_{L_n}(p_n,p_1).
\end{split}
\end{equation}
As particular cases of \eqnref{eqn ladder2}, we have
\begin{align}\label{eqn ladder3}
\frac{a c}{a+c} &=C_{n-i+1},&  \frac{a b}{a+b}&=B_{n-i+1},&   \frac{b c}{b+c}&=A_{n-i+1},&  \\
\label{eqn ladder4}
\frac{m k}{m+k} &=C_{i-1},&  \frac{m s}{m+s}&=B_{i-1},&   \frac{k s}{k+s}&=A_{i-1},&
\end{align}
where we used the notations as in \eqnref{eqn prism1} (and so as in \figref{fig 2lreduction}).
In \cite{C1}, we gave explicit formulas for the resistance values between any two vertices of a ladder graph.
Next, we use \cite[Equation 12 at page 959]{C1} to write
\begin{equation}\label{eqn ladder5}
\begin{split}
&A_n= -1 - \sqrt{3} +\frac{2 \sqrt{3} }{1-(2-\sqrt{3})^n}, \\
&B_n= -1 - \sqrt{3} +\frac{2 \sqrt{3} }{1+(2-\sqrt{3})^n}, \\
&C_n=n-1.
\end{split}
\end{equation}
We use Equations (\ref{eqn prism1}), (\ref{eqn ladder3}) and (\ref{eqn ladder4}) to derive
\begin{equation}\label{eqn prism2}
\begin{split}
&r(p_1,p_{i})=\frac{1}{2}\left(\frac{1}{\frac{1}{2+C_{n-i}}+\frac{1}{C_{i}}}+\frac{1}{\frac{1}{2+B_{n-i}}+\frac{1}{B_i}}\right),\\
&r(p_1,q_{i})= \frac{1}{2}\left(\frac{1}{\frac{1}{2+C_{n-i}}+\frac{1}{C_{i}}}+\frac{1}{\frac{1}{2+A_{n-i}}+\frac{1}{A_i}}\right).
\end{split}
\end{equation}
where $2 \leq i \leq n$. Next, we use \eqnref{eqn ladder5} in \eqnref{eqn prism2b} and then work with \cite{MMA} to simplify the algebraic expressions as below:
\begin{equation}\label{eqn prism2b}
\begin{split}
&r(p_1,p_{i})=\frac{1+\left(2-\sqrt{3}\right)^n-\left(2-\sqrt{3}\right)^{n-i+1}-\left(2-\sqrt{3}\right)^{i-1}}{2\sqrt{3} \left(1-\left(2-\sqrt{3}\right)^n\right)} +\frac{(n-i+1)(i-1)}{2n},\\
&r(p_1,q_{i})= \frac{1+\left(2-\sqrt{3}\right)^n+\left(2-\sqrt{3}\right)^{n-i+1}+\left(2-\sqrt{3}\right)^{i-1}}{2\sqrt{3} \left(1-\left(2-\sqrt{3}\right)^n\right)}+\frac{(n-i+1)(i-1)}{2n}.
\end{split}
\end{equation}
where $1 \leq i \leq n$.

By using the symmetries of the graph $Y_n$, we note that for every $i$ and $j$ in $\{ 1, \, 2, \, \dots, \, n \}$ we have
\begin{equation}\label{eqn prism4}
\begin{split}
r(p_1,p_{i})=r(p_j,p_k) \quad \text{and} \quad
r(p_1,q_{i})=r(p_j,q_k),
\end{split}
\end{equation}
where $1 \leq k \leq n$ and $k \equiv j+i-1$ $\mod \, n$.

Finally, the explicit values of $r(p,q)$ between any two vertices $p$ and $q$ of the prism graph $Y_n$ can be obtained by using
Equations (\ref{eqn prism0}), (\ref{eqn prism2b}) and (\ref{eqn prism4}).

It follows from \eqnref{eqn prism2b} that
\begin{equation}\label{eqn prism3}
\begin{split}
r(p_1,p_{i})+r(p_1,q_{i})=\frac{1}{\sqrt{3}} \Big( \frac{1+\left(2-\sqrt{3}\right)^n}{1-\left(2-\sqrt{3}\right)^n} \Big)+\frac{ (n-i+1)(i-1)}{n}.
\end{split}
\end{equation}

\section{Kirchhoff Index of $Y_n$ }\label{sec Kirchhoff index}

In this section, we obtain an explicit formula for Kirchhoff index of $Y_n$ by using our explicit formulas derived in \secref{sec resistances} for the resistances between any pairs of vertices of $Y_n$. Moreover, we obtain an interesting summation formula by combining our findings and what is known in the literature about Kirchhoff index of $Y_n$.


\begin{theorem}\label{thm Kirchhoff index}
For any positive integer $n$, we have
$$
Kf(Y_n)=\frac{n(n^2-1)}{6}+\frac{n^2}{\sqrt{3}} \Big[ \frac{2}{1-(2-\sqrt{3})^{n}} -1\Big].
$$
\end{theorem}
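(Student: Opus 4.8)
The plan is to collapse the double sum defining $Kf(Y_n)$ to a single sum over one base vertex by exploiting the symmetries of $Y_n$, then to substitute the compact formula \eqnref{eqn prism3} and evaluate two elementary sums.

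\emph{Step 1 (symmetry reduction).} By definition $Kf(Y_n)=\tfrac12\sum_{x\ne y}r(x,y)$, the sum ranging over the $2n$ vertices of $Y_n$. The rotational symmetry underlying \eqnref{eqn prism4} together with the left–right symmetry underlying \eqnref{eqn prism0} act transitively on the vertex set, so $\sum_{y}r(x,y)$ does not depend on the chosen vertex $x$. Taking $x=p_1$ and noting $r(p_1,p_1)=0$,
\[
Kf(Y_n)=n\sum_{i=1}^{n}\bigl(r(p_1,p_i)+r(p_1,q_i)\bigr).
\]

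\emph{Step 2 (substitute and sum).} By \eqnref{eqn prism3} — which one checks is also valid at $i=1$, where it reduces to $r(p_1,q_1)=\tfrac{1}{\sqrt3}\cdot\tfrac{1+(2-\sqrt3)^n}{1-(2-\sqrt3)^n}$ — each summand equals
\[
r(p_1,p_i)+r(p_1,q_i)=\frac{1}{\sqrt3}\cdot\frac{1+(2-\sqrt3)^n}{1-(2-\sqrt3)^n}+\frac{(n-i+1)(i-1)}{n}.
\]
Summing the first (constant in $i$) term over $i$ gives $\tfrac{n}{\sqrt3}\cdot\tfrac{1+(2-\sqrt3)^n}{1-(2-\sqrt3)^n}$. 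For the second term the substitution $j=i-1$ yields $\sum_{i=1}^{n}(n-i+1)(i-1)=\sum_{j=1}^{n-1}(n-j)j=n\cdot\tfrac{n(n-1)}{2}-\tfrac{(n-1)n(2n-1)}{6}=\tfrac{n(n^2-1)}{6}$, so its contribution is $\tfrac{n^2-1}{6}$. Multiplying the resulting bracket by $n$,
\[
Kf(Y_n)=\frac{n(n^2-1)}{6}+\frac{n^2}{\sqrt3}\cdot\frac{1+(2-\sqrt3)^n}{1-(2-\sqrt3)^n}.
\]

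\emph{Step 3 (rewrite).} The elementary identity $\tfrac{1+x}{1-x}=\tfrac{2}{1-x}-1$ with $x=(2-\sqrt3)^n$ converts the last display into the form stated in the theorem. Since \eqnref{eqn prism2b} and hence \eqnref{eqn prism3} were derived under the standing assumption $n\ge 3$, the remaining cases $n=1,2$ are checked directly against the small graphs $Y_1$ and $Y_2$ of \figref{fig Y1andY2}.

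There is no serious obstacle here; once \eqnref{eqn prism3} is available the argument is bookkeeping. The point deserving the most care is Step 1: one must verify that the combined action of the rotations and the left–right flip really is transitive on the $2n$ vertices (so that the multiplicative factor is exactly $2n$, not something depending on the vertex), and that \eqnref{eqn prism3} does extend to the boundary index $i=1$ where $r(p_1,p_1)=0$. The low-$n$ verification is the only other loose end.
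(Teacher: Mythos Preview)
Your proof is correct and follows essentially the same route as the paper: the same symmetry reduction via \eqnref{eqn prism0} and \eqnref{eqn prism4} to $n\sum_i\bigl(r(p_1,p_i)+r(p_1,q_i)\bigr)$, followed by substitution of \eqnref{eqn prism3}. You have simply made explicit the ``some algebra'' that the paper leaves to \cite{MMA}, and your boundary-case checks ($i=1$ and $n\le 2$) are already covered in the paper by the stated range $1\le i\le n$ in \eqnref{eqn prism2b} and the remarks around \figref{fig Y1andY2}.
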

\begin{proof}
With the notation of vertices as in \figref{fig prismn} we have
\begin{equation*}
\begin{split}
Kf(Y_n)&= \frac{1}{2} \sum_{p,\, q \in \vv{Y_n}}r(p,q), \text{ by definition \cite{KR}}. \\
&=n \sum_{i=1}^n r(p_1,p_i)+r(p_1,q_i), \text{ Equations (\ref{eqn prism0}) and (\ref{eqn prism4}).}
\end{split}
\end{equation*}
Then the result follows if we use first \eqnref{eqn prism3} and do
some algebra \cite{MMA}.
\end{proof}
Alternatively, we can express the Kirchhoff index formula in \thmref{thm Kirchhoff index} as follows:
$$
Kf(Y_n)=\frac{n(n^2-1)}{6}-\frac{n^2}{\sqrt{3}}\coth \big(\frac{n}{2} \ln(2-\sqrt{3}) \big).
$$
$Kf(Y_n)$ have rational values. For example, its values for $1 \leq n \leq 10$ are as follows:

$1$, $11/3$, $47/5$, $58/3$, $655/19$, $279/5$, $5985/71$, $2540/21$, $44193/265$, $139655/627$.

Next, we show that an interesting trigonometric sum identity hold:
\begin{theorem}\label{thm trig sum}
For any positive integer $n$, we have
$$
\sum_{k=0}^{n-1} \frac{1}{1+2 \sin^2({\frac{k \pi}{n}})}=\frac{n}{\sqrt{3}}\Big[ \frac{2}{1-(2-\sqrt{3})^{n}}-1 \Big].
$$
\end{theorem}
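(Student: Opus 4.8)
The plan is to bypass the geometry of circuit reductions entirely and instead pit the closed form of \thmref{thm Kirchhoff index} against the classical spectral formula for the Kirchhoff index. Recall that for a connected graph $G$ on $N$ vertices with nonzero Laplacian eigenvalues $\lambda_1,\dots,\lambda_{N-1}$ one has $Kf(G)=N\sum_{i=1}^{N-1}1/\lambda_i$. The prism graph $Y_n$ is the Cartesian product $C_n\,\square\,K_2$, so its $2n$ Laplacian eigenvalues are the numbers $2-2\cos(2\pi k/n)$ and $4-2\cos(2\pi k/n)$ for $k=0,1,\dots,n-1$, of which only the first family vanishes, and only at $k=0$. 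Hence
$$Kf(Y_n)=2n\left[\sum_{k=1}^{n-1}\frac{1}{2-2\cos(2\pi k/n)}+\sum_{k=0}^{n-1}\frac{1}{4-2\cos(2\pi k/n)}\right].$$

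Next I would simplify the two sums separately. The double-angle identity $\cos(2\pi k/n)=1-2\sin^2(k\pi/n)$ rewrites the second denominator as $4-2\cos(2\pi k/n)=2\bigl(1+2\sin^2(k\pi/n)\bigr)$, so the second sum contributes precisely $n\sum_{k=0}^{n-1}1/\bigl(1+2\sin^2(k\pi/n)\bigr)$ to $Kf(Y_n)$. For the first sum I would invoke the well-known evaluation $\sum_{k=1}^{n-1}\csc^2(\pi k/n)=(n^2-1)/3$, equivalently $\sum_{k=1}^{n-1}1/(2-2\cos(2\pi k/n))=(n^2-1)/12$, which contributes $n(n^2-1)/6$. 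Combining the two pieces gives
$$Kf(Y_n)=\frac{n(n^2-1)}{6}+n\sum_{k=0}^{n-1}\frac{1}{1+2\sin^2(k\pi/n)}.$$

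Finally, comparing this with the closed form $Kf(Y_n)=\frac{n(n^2-1)}{6}+\frac{n^2}{\sqrt{3}}\bigl[\frac{2}{1-(2-\sqrt{3})^{n}}-1\bigr]$ of \thmref{thm Kirchhoff index}, the polynomial terms cancel, and dividing through by $n$ yields exactly the asserted identity. I do not expect a genuine obstacle here: the content is bookkeeping — verifying the eigenvalue list of $C_n\,\square\,K_2$ (in particular that the ``$+2$'' family never vanishes, so it really has $n$ terms) and citing or reproving the auxiliary cosecant identity $\sum_{k=1}^{n-1}\csc^2(\pi k/n)=(n^2-1)/3$. The one point to be careful about is expository rather than mathematical: making sure the normalization of the spectral Kirchhoff formula used here agrees with the convention under which \thmref{thm Kirchhoff index} was established, after which the whole statement follows by direct substitution.
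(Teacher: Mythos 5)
Your proposal is correct and matches the paper's own proof: the paper likewise writes $Y_n = P_2 \,\Box\, C_n$, lists the Laplacian eigenvalues $4-2\cos(i\pi/2)-2\cos(2j\pi/n)$, uses the spectral formula $Kf(Y_n)=2n\sum_{\lambda\neq 0}1/\lambda$ together with the cosecant identity $\sum_{k=1}^{n-1}\csc^2(k\pi/n)=(n^2-1)/3$, and then compares with \thmref{thm Kirchhoff index} to cancel the polynomial terms. No substantive difference in approach.
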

\begin{proof}
Prism graph $Y_n$ can be seen as the cartesian product $P_2 \Box C_n$, where $P_2$ is the path graph with $2$ vertices and $C_n$ is the cycle graph with $n$ vertices. Moreover, considering the Laplacian eigenvalues of $P_2$ and $C_n$ we see that the Laplacian eigenvalues of $Y_n$ (see \cite{LCAAE}, \cite{RB} and \cite{CDS})
are
\begin{equation}\label{eqn eigen1}
\begin{split}
\lambda_{ij}=4-2\cos{(\frac{i \pi}{2})}-2\cos{(\frac{2j \pi}{n})}, \qquad \text{where $i=0,1$ and $j=0,1,\dots,n-1$}.
\end{split}
\end{equation}
We recall that \cite[pg 644]{PBM}
\begin{equation}\label{eqn eigen2}
\begin{split}
\sum_{k=1}^{n-1} \frac{1}{ \sin^2{(\frac{k \pi}{n})}}=\frac{n^2-1}{3}.
\end{split}
\end{equation}
Now, we can express the Kirchhoff index via the eigenvalues of the  discrete Laplacian matrix of $Y_n$ \cite{KR}:
\begin{equation}\label{eqn KI Prism}
\begin{split}
Kf(Y_n)&=2n \sum_{\lambda_{ij} \neq 0} \frac{1}{\lambda_{ij}}\\
&=n \sum_{k=1}^{n-1} \frac{1}{1-\cos{(\frac{2k \pi}{n})}} + n \sum_{k=0}^{n-1} \frac{1}{2-\cos{(\frac{2k \pi}{n})}}, \quad \text{by \eqnref{eqn eigen1}},\\
&=n \sum_{k=1}^{n-1} \frac{1}{2 \sin^2{(\frac{k \pi}{n})}} + n \sum_{k=0}^{n-1} \frac{1}{1+2 \sin^2{(\frac{k \pi}{n})}}, \quad \text{using $1-\cos{(\frac{2k \pi}{n})}=2 \sin^2{(\frac{k \pi}{n})}$},\\
&=\frac{n(n^2-1)}{6}+n \sum_{k=0}^{n-1} \frac{1}{1+2 \sin^2({\frac{k \pi}{n}})}, \quad \text{by \eqnref{eqn eigen2}}.
\end{split}
\end{equation}
Then the proof is completed by combining \eqnref{eqn KI Prism} and the result in \thmref{thm Kirchhoff index}.
\end{proof}

\section{Recursive Formulations}\label{sec Recurrence}

In this section, we give recursive formulas for the resistance values obtained in \secref{sec resistances}, the Kirchhoff index of $Y_n$ and the trigonometric formula given in \thmref{thm trig sum}. As we did in \cite{C1} for Ladder graph, we use the sequence of integers $G_n$ defined by the following recurrence relation
$$G_{n+2}=4G_{n+1}-G_{n}, \quad \text{if $n \geq 2$, and $G_0=0$, $G_1=1$}.$$
We have
$$
G_{n}=\frac{(2-\sqrt{3})^{-n}-(2-\sqrt{3})^n}{2 \sqrt{3}}, \quad \text{for each integer $n \geq 0$}.
$$
The sequence $G_n$ has various well-known properties \cite{SL}.
For example, it gives the number of spanning trees of $L_n$ \cite{BP}, and the number of the spanning trees of the prism graph $Y_n$  \cite{J} is given by
\begin{equation}\label{eqn spantree}
\begin{split}
\frac{n}{2} \Big( (2+\sqrt{3})^{n} + (2-\sqrt{3})^{n} -2 \Big) = \frac{n}{2} \Big( \frac{G_{2n}}{G_n} -2 \Big).
\end{split}
\end{equation}
Let $g_n=(2-\sqrt{3})^n=\frac{1}{G_{n+1}-(2-\sqrt{3})G_n}$ for any nonnegative integer $n$. Then, for any integer $i \in \{1,2, \dots, n \}$ we have
\begin{equation*}
\begin{split}
&r(p_1,p_{i})=\frac{(n-i+1)(i-1)}{2n}+\frac{ G_{n}^2}{ G_{2n}-2G_n}-\Big(\frac{1}{4\sqrt{3}}+\frac{ G_{n}^2}{ 2G_{2n}-4G_n} \Big) (g_{n-i+1}+g_{i-1}),\\
&r(p_1,q_{i})= \frac{(n-i+1)(i-1)}{2n}+\frac{ G_{n}^2}{ G_{2n}-2G_n}+\Big(\frac{1}{4\sqrt{3}}+\frac{ G_{n}^2}{ 2G_{2n}-4G_n} \Big) (g_{n-i+1}+g_{i-1}).
\end{split}
\end{equation*}
Similarly, the other resistance values can be expressed in terms of $G_n$ by using the symmetries in $Y_n$ like \eqnref{eqn prism0} and \eqnref{eqn prism4}.

Here are how we can express the results given in \thmref{thm Kirchhoff index} and \thmref{thm trig sum} in terms of $G_n$:
\begin{equation}\label{eqn KI and Trig II}
\begin{split}
Kf(Y_n)=\frac{n(n^2-1)}{6}+ \frac{2n^2 G_{n}^2}{ G_{2n}-2G_n} \qquad \text{and } \quad
\sum_{k=0}^{n-1} \frac{1}{1+2 \sin^2({\frac{k \pi}{n}})}=\frac{2n G_{n}^2}{ G_{2n}-2G_n}.
\end{split}
\end{equation}


\textbf{Acknowledgements:} This work is supported by Abdullah Gul University Foundation of Turkey.


\end{document}